\documentclass[a4paper]{article}
\usepackage[most]{tcolorbox}
\usepackage{lineno}
\usepackage{amsmath,amsthm,amsfonts,amssymb,mathrsfs} % mathrsfs enables \mathscr
\usepackage{lipsum}
\usepackage{mathtools}
\usepackage{dsfont}
\usepackage{pdflscape}
\usepackage{float}
\usepackage{mathabx}
\usepackage{tikz-cd}
\usepackage{bm}
\usepackage{tikz}
\usepackage{pgfplots}
\usetikzlibrary{arrows}
\usepackage{appendix}
\usepackage{hyperref}

\usepackage{authblk} % for author and email information
\usepackage[margin=1in]{geometry}

\usepackage{cite}   % to cite ranges of references instead of long lists of reference numbers

\usepackage{graphicx}
\usepackage{color}
\usepackage{mathrsfs}

\newtheorem{theorem}{Theorem}

\newtheorem{corollary}{Corollary}

\def\Ce{\mathds{C}}

\def\Ne{\mathds{N}}

\def\Re{\mathds{R}}

\def\1e{\mathds{1}}

\def\vgf{\mathbf{f}}
\def\vgg{\mathbf{g}}

\def\vg0{\mathbf{0}}

\def\B{\mathscr{B}}

\def\D{\mathscr{D}}

\def\F{\mathscr{F}}

\def\M{\mathscr{M}}

\def\ggg{\boldsymbol{\gamma}}

\def\ran#1{\mathop{\mathrm{ran}}#1}
\def\ker#1{\mathop{\mathrm{ker}}#1}

\def\meas{\mathop{\mathrm{meas}}}

\def\ud{\,\mathrm{d}}

\def\vphan{\vphantom{\bigl|}}

\def\scal#1#2{\langle #1,#2 \rangle}
\def\scalc#1#2{
    \left\langle\vphan
    \mskip 2mu #1 \mskip 2mu,
    \mskip 2mu #2 \mskip 2mu
    \right\rangle
    }

\def\set#1#2{\{\mskip 1mu #1 \mskip 1mu
    | \mskip 1mu #2 \mskip 1mu \}
    }
\def\setc#1#2{
    \left\{
    \mskip 2mu #1 \mskip 2mu
    \left| \vphan\vphantom{#1#2} \right.
    \mskip 2mu #2 \mskip 2mu
    \right\}
    }

\def\mod#1{|\mskip 1mu #1 \mskip 1mu|}
\def\modc#1{
    \left|
    \mskip 2mu #1 \vphan \mskip 2mu
    \right|
    }

\def\norm#1{\| \mskip 1mu #1 \mskip 1mu \|}
\def\normc#1{
    \left\|
    \mskip 2mu #1 \vphan \mskip 2mu
    \right\|
    }

\def\operator#1#2#3#4#5{
        \begin{array}{lcll}
        \displaystyle #1 \colon & \displaystyle #2 & \longrightarrow & \displaystyle #3 \\[.2ex]
                     & \displaystyle #4 & \longmapsto     & \displaystyle #5
        \end{array}
        }
\title{Interpolating between Tikhonov regularization and spectral cutoff}
\author[1]{Martin Sæbye Carøe}
\author[1]{Mirza Karamehmedovi\'c}
\author[2]{Pierre Maréchal}
\date{}
\affil[1]{Department of Applied Mathematics and Computer Science,
Technical University of Denmark, DK-2800 Kgs. Lyngby, Denmark}
\affil[2]{Universit\'e Toulouse III - Paul Sabatier, 118 route de Narbonne,
FR-31062 Toulouse, France}

\begin{document}
%\linenumbers
\maketitle

\begin{abstract}
Regularizing a linear ill-posed operator equation
can be achieved by manipulating the spectrum of the
operator's pseudo-inverse. Tikhonov regularization
and spectral cutoff are well-known techniques within
this category. This paper introduces an interpolating
formula that defines a one-parameter family of
regularizations, where Tikhonov and spectral cutoff
methods are represented as limiting cases. By adjusting
the interpolating parameter taking into account the specific operator equation under consideration, it is possible to mitigate
the limitations associated with both Tikhonov and
spectral cutoff regularizations. The proposed approach
is demonstrated through numerical simulations in the
fields of signal and image processing. 
\end{abstract}

%\begin{enumerate}
%    \item generalize Theorem 4.9 p. 103 in Colton-Kress: relaxed conditions on $q$ in the regularization scheme, only requiring something at the well-known singular values $\mu_n$ instead of over the whole range $0<\mu\le\|A\|$
%    \item exploit BandWidth to find the best Tikhonov parameter $\lambda$
%\end{enumerate}

%%%%%%%%%%%%%%%%%%%%%%
\section{Introduction}
\label{section:introduction}

We consider a general linear ill-posed operator
equation
\begin{equation}
\label{ill-posed_operator_equation}
Tf=g,
\end{equation}
in which $T\colon F\to G$ is a bounded linear
operator from $F$ to $G$, where $F$ and $G$ are infinite
dimensional separable Hilbert spaces. In a number of real world
applications, $T$ is such that
\begin{equation}
\label{ill-posedness}
\inf\setc{\norm{Tf}\vphantom{\big|}}{\,f\in(\ker{T})^\perp,\;\norm{f}=1}=0.
\end{equation}
When~$T$ is injective, an assumption which
will be in force throughout, Condition~\eqref{ill-posedness}
boils down to
\begin{equation}
\label{ill-posedness-injective}
\inf\setc{\norm{Tf}\vphantom{\big|}}{\norm{f}=1}=0.
\end{equation}

The latter condition results in ill-posedness, meaning that:
\begin{enumerate}
\item[(i)]
the range $\ran{T}$ of~$T$ is not closed in~$G$;
\item[(ii)]
the densely defined pseudo-inverse $T^\dagger\colon \ran{T}+\ker{T^*}\to F$
is unbounded, so that the minimum-norm least squares solution to the linear
equation $Tf=g$ does not depend continuously on the data~$g$.
\end{enumerate}

The purpose of regularization theory is to provide approximate solutions
to~\eqref{ill-posed_operator_equation} that depend continuously on the
data, so as to avoid instability in the inversion process. See~\cite{arridge} and the references therein for a thorough treatment of regularization. 
The main objective of this paper is to bridge the gap between two
of the most frequently used regularization methods, namely the Tikhonov
regularization technique and the so-called spectral cutoff method. We shall propose
a family of regularization techniques depending on some {\sl interpolating
parameter} that encompasses both Tikhonov regularization and the spectral cutoff
as special (extreme) cases. The proposed filters can be naturally included in learning schemes~\cite{arridge} for further, data-driven refinement.

%%%%%%%%%%%%%%%%%%%%%%%%%%%%%%%%%%%%%%%%%%%%
\section{A family of regularization schemes}

%===================================
\subsection{Spectral regularization}

Prior to defining our interpolating family of regularization schemes,
we establish a general theorem on spectral methods (see Theorem~\ref{spectral-reg} below).
This theorem is already well-known, but our approach to its proof is new in that we make use of the {\sl singular
value expansion} of general bounded operators. More precisely, we shall make use of the following theorem.

\begin{theorem}\sf
\label{sve}
Let $T\colon F\to G$ be an injective bounded linear operator, where $F$ and $G$
are real Hilbert spaces. There then exist
\begin{enumerate}
\item
a Borel space $(\M,\B,\mu)$ with completely separable topology,
\item
an unitary operator $V\colon L^2(\M,\B,\mu)\to F$ and
an isometry $W\colon L^2(\M,\B,\mu)\to G$,
\item
an essentially bounded measurable function $\sigma\colon\M\to\Re$
that is strictly positive $\mu$-almost everywhere,
\end{enumerate}
such that $T=W[\sigma]V^*$, in which $[\sigma]$ denotes the operator of
multiplication by~$\sigma$. Moreover, $\ran{W}=\overline{\ran{T}}$. 
\end{theorem}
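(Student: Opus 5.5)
Our plan is to derive the singular value expansion from the spectral theorem in multiplication-operator form, applied to the positive self-adjoint operator $T^*T$, followed by a polar decomposition. We work in real Hilbert spaces; we may either complexify and check that every construction is compatible with conjugation, or invoke directly the real version of the multiplication form of the spectral theorem. Since the paper's setting assumes $F$ separable, we use the version of the spectral theorem for separable spaces.

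\textbf{Step 1 (spectral theorem for $T^*T$).} The operator $A=T^*T\colon F\to F$ is bounded, self-adjoint and positive. Because $T$ is injective, $\scal{Af}{f}=\norm{Tf}^2>0$ for every $f\neq0$, so $A$ is injective. The multiplication form of the spectral theorem supplies three objects. The first is a measure space $(\M,\B,\mu)$. We may take $\M$ to be a disjoint union of countably many copies of $\mathrm{sp}(A)\subset[0,\norm{T}^2]$, each carrying a finite Borel measure, so that $\M$ is a Polish (completely separable metrizable) space with its Borel $\sigma$-algebra. The second is a unitary $V\colon L^2(\M,\B,\mu)\to F$. The third is a bounded real Borel function $a\ge0$ with $V^*AV=[a]$. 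Injectivity of $A$ forces $\mu(\{a=0\})=0$: otherwise the indicator function of a finite-measure piece of $\{a=0\}$ would be a nonzero element of $\ker{[a]}$. We then set $\sigma=\sqrt a$. This function is essentially bounded by $\norm{T}$ and strictly positive $\mu$-almost everywhere. By the uniqueness of the positive square root, $\mathrm{(}T^*T\mathrm{)}^{1/2}=V[\sigma]V^*$.

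\textbf{Step 2 (polar decomposition, construction of $W$).} Let $|T|=(T^*T)^{1/2}$. For every $f$ we have $\norm{|T|f}^2=\scal{T^*Tf}{f}=\norm{Tf}^2$. Hence the map $U_0\colon|T|f\mapsto Tf$ is a well-defined isometry from $\ran{|T|}$ onto $\ran{T}$. The range of $|T|$ is dense in $F$, since $\ker{|T|}=\ker{T}=\{0\}$. Therefore $U_0$ extends by continuity to an isometry $U\colon F\to G$ whose range is $\overline{\ran{T}}$: the range is closed because $U$ is an isometry, and it contains $\ran{T}$ as a dense subset. We now define $W=UV$. This is an isometry from $L^2(\M,\B,\mu)$ into $G$, and
\[
W[\sigma]V^*=UV[\sigma]V^*=U|T|=T.
\]
Moreover, $\ran{W}=\ran{U}=\overline{\ran{T}}$, because $V$ is onto.

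\textbf{Main obstacle.} The substantive step is Step 1. It consists in choosing the version of the spectral theorem that yields the stated regularity of $(\M,\B,\mu)$ (a completely separable topology carrying a Borel measure) and in running it over the reals. We would first try the cyclic-subspace decomposition. Separability of $F$ gives countably many mutually orthogonal cyclic subspaces for $A$, with cyclic vectors $h_k$. On each one, $A$ is unitarily equivalent to multiplication by $\lambda$ on $L^2(\mathrm{sp}(A),\mu_{h_k})$, where $\mu_{h_k}$ is the spectral measure of $h_k$. Since these spectral measures are real and the functional calculus of a real self-adjoint operator preserves the real space, the whole construction stays real. Taking the disjoint union of the copies of $\mathrm{sp}(A)$ produces the Polish space $\M$. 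Everything after Step 1 is routine.
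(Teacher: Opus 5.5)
Your proof is correct, and it follows the standard route. The paper gives no proof of its own and only cites \cite[Theorem 3]{crane2020singular}, which (as far as I recall) applies the multiplication-form spectral theorem to $T^*T$, takes $\sigma=\sqrt{a}$ (positive a.e. by injectivity), and obtains $W$ as the isometric extension of $[\sigma]V^*f\mapsto Tf$ from the dense range of $[\sigma]$; that map is exactly your $W=UV$ from the polar decomposition.
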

See~\cite[Theorem 3]{crane2020singular}.
The following diagram illustrates the singular value expansion
exhibited in the last theorem:
$$
\begin{tikzcd}
F \arrow{r}{T}
& G \\
L^2(\M,\B,\mu)\arrow[swap]{u}{V}
\arrow{r}{[\sigma]}
& L^2(\M,\B,\mu)\arrow[swap]{u}{W}
\end{tikzcd}
$$

Under Condition~\eqref{ill-posedness-injective}, $\ran{T}$
is not closed. By~\cite[Theorem 4]{crane2020singular}, this
implies that~$\sigma$ is not bounded away from zero.
On the other hand, since $\ran{W}=\overline{\ran{T}}$,
$$
\ker{T^*}=(\ran{W})^\perp=\ker{W^*}.
$$
Clearly, the injectivity of~$T$ implies that of~$W$,
which in turn implies that $W^\dagger=(W^*W)^{-1}W^*=W^*$.
As can be easily seen, the following factorizations hold:
\begin{eqnarray*}
T^*&=&V[\sigma]W^*,\\
T^*T&=&V[\sigma^2]V^*,\\
T^\dagger=(T^*T)^{-1}T^*&=&V[\sigma^{-1}]W^*.
\end{eqnarray*}

Recall that a family $(R_\alpha)$ of bounded operators from~$G$
to~$F$ is said to be a {\sl regularization of~$T^\dagger$}
if there exists a {\sl parameter choice rule}
$$
\operator{\alpha}{\Re_+\times G}{\Re_+\setminus\{0\}}
{(\delta,g^\delta)}{\alpha(\delta,g^\delta)}
$$
such that
\begin{enumerate}
\item[(1)]
$\sup\setc
{\norm{\alpha(\delta,g^\delta)}}
{g^\delta\in G,\;\norm{g^\delta-g}\leq\delta}
\to 0$ as $\delta\downarrow 0$;
\item[(2)]
$\sup\setc
{\norm{R_{\alpha(\delta,g^\delta)}g^\delta-T^\dagger g}}
{g^\delta\in G,\;\norm{g^\delta-g}\leq\delta}
\to 0$ as $\delta\downarrow 0$;
\end{enumerate}
Recall also that $(R_\alpha)$ is a regularization of~$T^\dagger$
if $(R_\alpha)$ converges pointwise to~$T^\dagger$ on its domain
$$
\D(T^\dagger)=\ran{T}+(\ran{T})^\perp=\ran{T}+\ker{T^*}.
$$
See~\cite[Proposition 3.4]{engl1996regularization}.

The following result is well-known; however, we here propose a novel proof:
\begin{theorem}\sf
\label{spectral-reg}
Let~$T\colon F\to G$ be an injective bounded operator with
singular value expansion $T=W[\sigma]V^*$ as in Theorem~\ref{sve}.
Assume that $\sigma$ is not bounded away from zero, so that the
inverse problem $Tf=g$ is ill-posed.
Let $q\colon\Re_+^*\times\Re_+\to\Ce$ be such that
\begin{enumerate}
\item[(A1)]
for every $(\alpha,\sigma)\in\Re_+^*\times\Re_+$, 
$\mod{q(\alpha,\sigma)}\leq M$ for some positive constant~$M$,
\item[(A2)]
for every $\sigma\in\Re_+$, $\mod{q(\alpha,\sigma)}\leq c_\alpha\sigma$,
in which $c_\alpha$ is some positive constant.
\end{enumerate}
Then $R_\alpha:= V[\sigma^{-1}q(\alpha,\sigma)]W^*$ is well-defined and bounded,
and $\norm{R_\alpha}\leq c_\alpha$.
Moreover, if
\begin{enumerate}
\item[(A3)]
for every $\sigma\in\Re_+$,
$\lim_{\alpha\downarrow 0}q(\alpha,\sigma)=1$,
\end{enumerate}
then $(R_\alpha)$ is a regularization of~$T^\dagger$.
\end{theorem}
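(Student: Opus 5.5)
The proof works through the singular value expansion of Theorem~\ref{sve}, so that every claim reduces to a statement about multiplication operators on $L^2(\M,\B,\mu)$.

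\textbf{Boundedness of $R_\alpha$.} Set $h_\alpha:=\sigma^{-1}q(\alpha,\sigma)$, a measurable function on $\M$ defined $\mu$-a.e., since $\sigma>0$ a.e. By (A2), $\mod{h_\alpha}\leq c_\alpha$ a.e., so $[h_\alpha]$ is a bounded multiplication operator with norm at most $c_\alpha$. Because $V$ is unitary and $W^*$ is a contraction ($W$ being an isometry), $\norm{R_\alpha}\leq\norm{V}\,\norm{[h_\alpha]}\,\norm{W^*}\leq c_\alpha$. Condition (A1) is not needed for this part.

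\textbf{Regularization.} By the cited characterization~\cite[Proposition 3.4]{engl1996regularization}, it suffices to show that $R_\alpha g\to T^\dagger g$ for every $g\in\D(T^\dagger)=\ran{T}+\ker{T^*}$. The kernel part is handled first. If $g\in\ker{T^*}=\ker{W^*}$, then $R_\alpha g=0=T^\dagger g$, because both operators factor through $W^*$. For the range part, take $g=Tf$ with $f\in F$. Then
\[
W^*g=W^*W[\sigma]V^*f=[\sigma]V^*f,
\]
and therefore
\[
R_\alpha g=V\bigl[q(\alpha,\sigma)\bigr]\varphi,\qquad \varphi:=V^*f\in L^2(\M,\B,\mu),
\]
while $T^\dagger g=f=V\varphi$. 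Since $V$ is unitary,
\[
\norm{R_\alpha g-T^\dagger g}^2=\int_\M \mod{q(\alpha,\sigma(m))-1}^2\,\mod{\varphi(m)}^2\ud\mu(m).
\]
For $\mu$-a.e.\ $m$ the integrand tends to $0$ as $\alpha\downarrow0$, by (A3) applied at the value $\sigma(m)$. The integrand is dominated by $(M+1)^2\mod{\varphi}^2\in L^1(\mu)$, using (A1). Dominated convergence then gives $R_\alpha g\to T^\dagger g$.

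\textbf{Main subtlety.} The step needing care is the limit $\alpha\downarrow0$, which is continuous rather than sequential. I will apply dominated convergence along an arbitrary sequence $\alpha_n\downarrow0$; since every such sequence yields the same limit $0$, the limit along $\alpha\downarrow0$ follows. A second point is measurability of $m\mapsto q(\alpha,\sigma(m))$. I will assume $q(\alpha,\cdot)$ is Borel for each $\alpha$, which is implicit in the statement that $R_\alpha$ is well-defined.
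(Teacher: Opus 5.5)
Your proof is correct and follows essentially the same route as the paper. You bound the multiplier $\sigma^{-1}q(\alpha,\sigma)$ via (A2), reduce $R_\alpha g - T^\dagger g$ on the range component to $V[q(\alpha,\sigma)-1]V^*f$, and conclude by dominated convergence using (A1) and (A3). Your splitting $g=Tf+k$ with $k\in\ker{W^*}$ is equivalent to the paper's projection onto $\overline{\ran{T}}$. Your use of $\norm{W^*}\leq 1$ is in fact slightly more careful than the paper's equality $\norm{W^*g}=\norm{g}$, which only holds for $g\in\ran{W}$.
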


\begin{proof}
Let~$g\in\D(T^\dagger)$ and let $\bar{g}$ be its projection onto the
closure of the range of~$T$. Observe that, since~$g$
belongs to the domain of~$T^\dagger$, its projection~$\bar{g}$
must actually belong to the range of~$T$. Hence $\bar{g}=Tf$
for some $f\in F$, from which we deduce that
$$
[\sigma^{-1}]W^*\bar{g}=
[\sigma^{-1}]W^*W[\sigma]V^*f=
V^*f\in L^2(\M,\B,\mu).
$$
By Assumption (A1), $[q(\alpha,\sigma)][\sigma^{-1}]W^*g=[q(\alpha,\sigma)]V^*f$
also belongs to $L^2(\M,\B,\mu)$, which in turn implies that
$$
V[q(\alpha,\sigma)][\sigma^{-1}]W^*g=:R_\alpha g
$$
is well-defined.
Under Assumption (A2), we have, for every $g\in G$,
\begin{eqnarray*}
\normc{R_\alpha g}^2
&=&
\normc{V[\sigma^{-1}q(\alpha,\sigma)]W^*g}^2\\
&=&
\normc{[\sigma^{-1}q(\alpha,\sigma)]W^*g}^2\\
&\leq&
\normc{\sigma^{-1}q(\alpha,\sigma)}_\infty^2\normc{W^*g}^2\\
&=&
\normc{\sigma^{-1}q(\alpha,\sigma)}_\infty^2\normc{g}^2\\
&\leq&
c_\alpha^2\normc{g}^2,
\end{eqnarray*}
in which the second equality is due to the unitarity of~$V$
and the third equality is due to the fact that~$W$ is an isometry
(so that $W^*W=I$, where~$I$ denotes the identity).
The first assertion in the theorem follows.

Next, let $f^\dagger:= T^\dagger g$ and $f_\alpha:=R_\alpha g$.
From the definition of~$R_\alpha$ and the formula for~$T^\dagger$,
we readily see that
$$
f^\dagger-f_\alpha=
V\big[\sigma^{-1}(1-q(\alpha,\sigma))\big]W^*g=
V\big[1-q(\alpha,\sigma)\big][\sigma^{-1}]W^*\bar{g}=
V\big[1-q(\alpha,\sigma)\big]V^*f.
$$
Therefore, using the unitarity of~$V$, 
$$
\normc{f^\dagger-f_\alpha}^2=
\normc{\big[1-q(\alpha,\sigma)\big]V^*f}_{L^2(\M,\B,\mu)}^2=
\int\modc{1-q(\alpha,\sigma)}^2 \big(V^*f(\sigma)\big)^2\ud\mu(\sigma).
$$
Finally, under Assumption (A3), the function
$\sigma\mapsto \modc{1-q(\alpha,\sigma)}^2$ converges pointwise
to zero, and Lebesgue's dominated convergence theorem then implies
that $f_\alpha\to f^\dagger$ as $\alpha\downarrow 0$.
\end{proof}

Notice that in the above theorem, $q$ is 
allowed to take complex values. Notice also that $q(\alpha,\cdot)$
need only be defined on the range of~$\sigma$.

The latter theorem generalizes Theorem 4.9 in~\cite{colton2013inverse},
in which compactness of~$T$ was assumed. This special case now appears
as a corollary. If we let~$T$ be a compact operator, then
the singular value expansion takes the form
$$
Tx=
\sum_{n=1}^\infty\sigma_n(g_n\otimes f_n)f=
\sum_{n=1}^\infty\sigma_n\scalc{f_n}{f}g_n,
$$
in which the $\sigma_n$ are the so-called singular values, $(f_n)$
is a Hilbert basis of~$F$ and $(g_n)$ is a Hilbert basis of $\overline{\ran{T}}$.
This is a particular case of Theorem~\ref{sve}, in which
the Borel space is $l^2_\Re$, the space of square summable real sequences, endowed with
the counting measure. It can be easily checked that the operators~$W$ and~$V$
are respectively given in this case by
$$
W(s_n)=\sum_{n=1}^\infty s_n g_n
\quad\hbox{and}\quad
V(s_n)=\sum_{n=1}^\infty s_n f_n.
$$
Theorem~\ref{spectral-reg} then takes the following form:

\begin{corollary}\sf
\label{spectral-reg-compact}
Let~$T\colon F\to G$ be an injective compact operator with
singular value expansion $T=\sum_{n=1}^\infty\sigma_n(g_n\otimes f_n)$.
Let $q\colon\Re_+^*\times\set{\sigma_n}{n\in\Ne^*}\to\Ce$ be such that
\begin{enumerate}
\item[(A1)]
for every $\alpha\in\Re_+^*$ and every $n\in\Ne^*$, 
$\mod{q(\alpha,\sigma_n)}\leq M$ for some positive constant~$M$,
\item[(A2)]
for every $n\in\Ne^*$, $\mod{q(\alpha,\sigma_n)}\leq c_\alpha\sigma_n$,
in which $c_\alpha$ is some positive constant.
\end{enumerate}
Then $R_\alpha:=\sum_{n=1}^\infty\sigma_n^{-1}q(\alpha,\sigma_n)(f_n\otimes g_n)$
is well-defined and bounded,
and $\norm{R_\alpha}\leq c_\alpha$.
Moreover, if
\begin{enumerate}
\item[(A3)]
for every $n\in\Ne^*$,
$\lim_{\alpha\downarrow 0}q(\alpha,\sigma_n)=1$,
\end{enumerate}
then $(R_\alpha)$ is a regularization of~$T^\dagger$.
\end{corollary}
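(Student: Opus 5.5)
The plan is to deduce Corollary~\ref{spectral-reg-compact} from Theorem~\ref{spectral-reg} by specializing the singular value expansion to the compact case. The measure space is $\M=\Ne^*$ with the discrete $\sigma$-algebra and the counting measure, so that $L^2(\M,\B,\mu)=l^2$. We take $V(s_n)=\sum_n s_nf_n$, which is unitary onto $F$ because $(f_n)$ is a Hilbert basis of~$F$. We take $W(s_n)=\sum_n s_ng_n$, an isometry onto $\overline{\ran{T}}$. The function $\sigma$ is $n\mapsto\sigma_n$, which is bounded by $\norm{T}$ and strictly positive since~$T$ is injective. Ill-posedness is automatic: $F$ is infinite dimensional and $T$ is compact, so $\sigma_n\to 0$ and $\sigma$ is not bounded away from zero. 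One then checks the factorization $W[\sigma]V^*f=\sum_n\sigma_n\scal{f_n}{f}g_n=Tf$, using $V^*f=(\scal{f_n}{f})_n$. Similarly, $V[\sigma^{-1}q(\alpha,\sigma)]W^*g=\sum_n\sigma_n^{-1}q(\alpha,\sigma_n)\scal{g_n}{g}f_n$, which is exactly the $R_\alpha$ of the corollary.

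The one point needing care is that in the corollary $q$ is only defined on $\Re_+^*\times\set{\sigma_n}{n\in\Ne^*}$, and (A1)--(A3) are imposed only at the singular values. Theorem~\ref{spectral-reg}, however, is stated for $q$ defined on $\Re_+^*\times\Re_+$. I would handle this in one of two ways.

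\emph{Option 1.} Invoke the remark after Theorem~\ref{spectral-reg}: $q(\alpha,\cdot)$ need only be defined on the range of~$\sigma$. Inspecting that proof, $q$ enters only through the multiplication operator $[q(\alpha,\sigma)]$. Its bounds are used only $\mu$-a.e., namely in $\norm{\sigma^{-1}q}_\infty\le c_\alpha$, in the boundedness by~$M$, and in the pointwise limit inside dominated convergence. Since $\mu$ is counting measure on $\Ne^*$, ``$\mu$-a.e.'' means ``at every $\sigma_n$''.

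\emph{Option 2.} Extend $q$ explicitly. Set $\tilde q(\alpha,s)=q(\alpha,\sigma_n)$ if $s=\sigma_n$; this is well defined, since equal singular values give the same argument of~$q$. Set $\tilde q(\alpha,s)=\min(1,s)$ otherwise. Then $\tilde q$ satisfies (A1) with constant $\max(M,1)$ and (A2) with constant $\max(c_\alpha,1)$. Condition (A3) holds trivially off the singular values, because there $\tilde q$ does not depend on~$\alpha$. This choice is not quite right, however: the limit there is $\min(1,s)\neq1$ for $s<1$. So I would instead take $\tilde q(\alpha,s)=\min(1,s/\alpha)$ off the singular values. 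This satisfies (A1) with constant~$1$ and (A2) with constant $1/\alpha$, and its limit as $\alpha\downarrow0$ is~$1$ for every $s>0$.

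Either way, the extended values never enter $R_\alpha$. At $s=0$ the limit in (A3) fails, but $\sigma$ never takes the value $0$ on~$\M$, so this is harmless. Under Option~1 it is irrelevant.

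With this, Theorem~\ref{spectral-reg} yields that $R_\alpha$ is well defined and bounded, with
$$
\norm{R_\alpha}\le\sup_n\sigma_n^{-1}\mod{q(\alpha,\sigma_n)}\le c_\alpha .
$$
This bound should be read directly off the discrete sup-norm; it avoids the enlarged constant that the extension would produce. Under (A3), Theorem~\ref{spectral-reg} also shows that $(R_\alpha)$ is a regularization of~$T^\dagger$. I expect the main obstacle to be this bookkeeping over the domain of~$q$, in particular keeping the bound $c_\alpha$ sharp. I would handle it by Option~1, stating it explicitly as an a.e. argument with respect to counting measure.
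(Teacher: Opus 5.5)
Your proposal is correct and follows the paper's route: the paper obtains the corollary by specializing Theorem~\ref{spectral-reg} to the compact singular value expansion with counting measure. Your choice $\M=\Ne^*$ with counting measure, giving $L^2(\M,\B,\mu)=l^2$, is what the paper intends, and your $V$ and $W$ built from $(f_n)$ and $(g_n)$ match the paper's. Your Option~1 is exactly the paper's remark that $q(\alpha,\cdot)$ need only be defined on the range of $\sigma$, and reading the bound $c_\alpha$ off the discrete sup-norm, together with noting that $\sigma=0$ is never attained, is the right way to make that remark rigorous.
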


An important example of an ill-posed inverse problem
is the standard deconvolution problem.
Given a function $\gamma$ in~$L^1(\Re^n)$,
we consider the convolution operator
$T_\gamma: L^2(\Re^n) \to L^2(\Re^n)$ defined by 
\begin{equation*}
    T_\gamma f = \gamma * f.
\end{equation*}
The deconvolution problem consists in solving $T_\gamma f=g$ for~$f$
when~$g$ is empirically known. This problem is ubiquitous in many areas
of applied sciences, including signal and image processing,
physics, statistics etc.
If $\gamma$ has an almost everywhere positive-valued Fourier
transform~$\hat{\gamma}$ then the singular value expansion is explicit:
$$
T_\gamma=W[\hat{\gamma}]V^*
\quad\hbox{with}\quad
W=V=U^{-1}
$$
where $U\colon L^2(\Re^n)\to L^2(\Re^n)$ is the
Fourier-Plancherel operator. Recall that the Fourier-Plancherel
operator can be defined as the closure to~$L^2(\Re)$ of the
Fourier transform, denoted likewise, on $L^1(\Re^n)\cap L^2(\Re^n)$.
Our definition of the Fourier transform of an integrable function~$f$
on~$\Re^n$ is:
$$
(Uf)(\xi)=
\widehat{f}(\xi):=
\int_{\Re^n} f(x)e^{-2\pi i\scal{x}{\xi}}\ud x.
$$
Under the standard assumption that $\meas\set{\xi\in\Re^n}{\hat{\gamma}(\xi)=0}=0$
the operator $T_\gamma$ is injective.
Remember that by the Riemann-Lebesgue lemma, $\widehat{\gamma}$ is continuous and
vanishes at infinity, which implies (via~\cite[Theorem 4]{crane2020singular})
that the deconvolution problem is ill-posed.
This particular situation is illustrated below:
$$
\begin{tikzcd}
L^2(\Re^n) \arrow{r}{T_\gamma}
& L^2(\Re^n) \\
L^2(\Re^n)\arrow[swap]{u}{U^{-1}}
\arrow{r}{[\hat\gamma]}
& L^2(\Re^n)\arrow[swap]{u}{U^{-1}}
\end{tikzcd}
$$

The last example reveals potential drawbacks of both Tikhonov regularization
and the spectral cutoff. Recall that the variational formulation
of the Tikhonov regularization consists in the minimization of
the functional
$$
\F(f)=\normc{g-T_\gamma f}^2+\normc{f}^2=
\normc{\hat{g}-\hat{\gamma}\hat{f}}^2+\normc{\hat{f}}^2,
$$
where we used Parseval's identity. We see that the penalty term
acts with equal strength everywhere in the Fourier domain. This
can't be optimal since the {\sl low frequencies} are constrained
by the data, $\hat{\gamma}$ being close to~1 near the origin (Charibdis).
On the other hand, the spectral cutoff multiplies the Fourier
transform of~$f$ by a {\sl mask}, which is equivalent to convolving~$f$
by a sinc-like function, thereby creating the Gibbs phenomenon (Sylla).

Our intuition is that we may somehow find a reasonable compromise
placing us in between these two choices for~$q$. In the next subsection,
we propose an interpolation formula for the function~$q$, and show that it fits
the regularization framework of Theorem~\ref{spectral-reg}.
Later on, we shall determine optimal values for the interpolating parameter. 
Finally, we shall illustrate the performance of our new
family of regularization schemes by means of 
numerical simulation.

%==============================================================
\subsection{Interpolating between Tikhonov and spectral cutoff}

Two important special cases of the function $q$ that appears
in Theorem~\ref{spectral-reg} are
$$
q_0(\alpha,\sigma)=
\frac{\sigma^2}{\alpha+\sigma^2}
\quad\hbox{and}\quad
q_\infty(\alpha,\sigma)=
\begin{dcases}
1 &\hbox{if }\sigma>\sqrt{\alpha},\\
1/2 &\hbox{if }\sigma=\sqrt{\alpha},\\
0 &\hbox{if }\sigma<\sqrt{\alpha}.
\end{dcases} 
$$
The function $q_0$ corresponds to Tikhonov regularization.
It satisfies (A1), (A2) and (A3) with $M=1$ and
$c_\alpha=\frac{1}{2\sqrt{\alpha}}$.
The function $q_\infty$ corresponds to spectral cutoff.
It satisfies (A1), (A2) and (A3) with $M=1$ and
$c_\alpha=\frac{1}{\sqrt{\alpha}}$.

We now define the family $(q_\tau)_{\tau\in\Re_+}$ by
\begin{equation}
\label{interpolating-q}
q_\tau(\alpha,\sigma)=
\frac{1}{1+\left(\frac{\sqrt{\alpha}}{\sigma}\right)^{2+\tau}}.
\end{equation}
It is readily seen that $\tau=0$ yields the Tikhonov case
while in the limit $\tau\to\infty$ one retrieves the spectral
cutoff case.

As can be seen in Figure \ref{fig_filter_functions}, the filter functions $q_\tau$ operate somewhere in-between the Tikhonov regularization and TSVD by making the cut-off around $\sqrt{\alpha}$ more steep than for the Tikhonov regularization, but less steep than the TSVD.

\begin{figure}[H]
\begin{centering}
\begin{tikzpicture}

    \node[inner sep=0pt] (plot1) at (0,0)
        {\includegraphics[width=7cm]{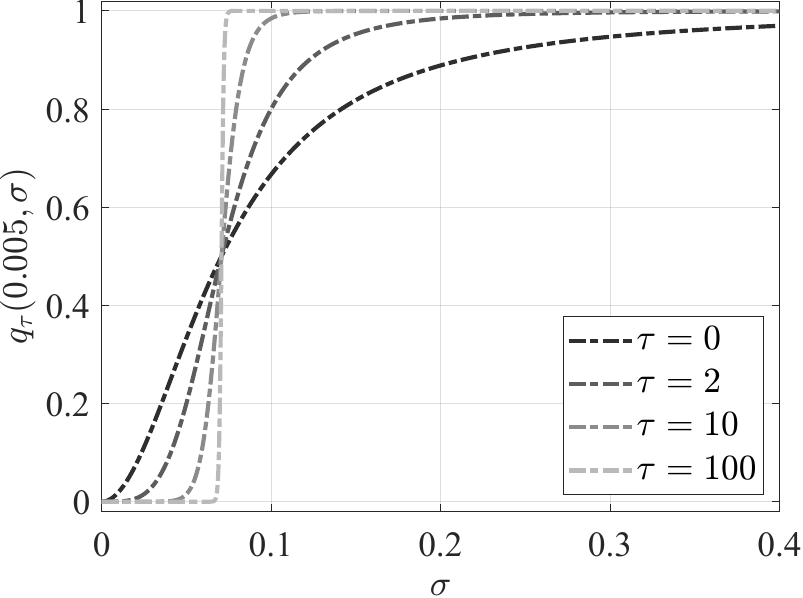}};
        
    \node[inner sep=0pt] (plot2) at (8,0)
        {\includegraphics[width=7cm]{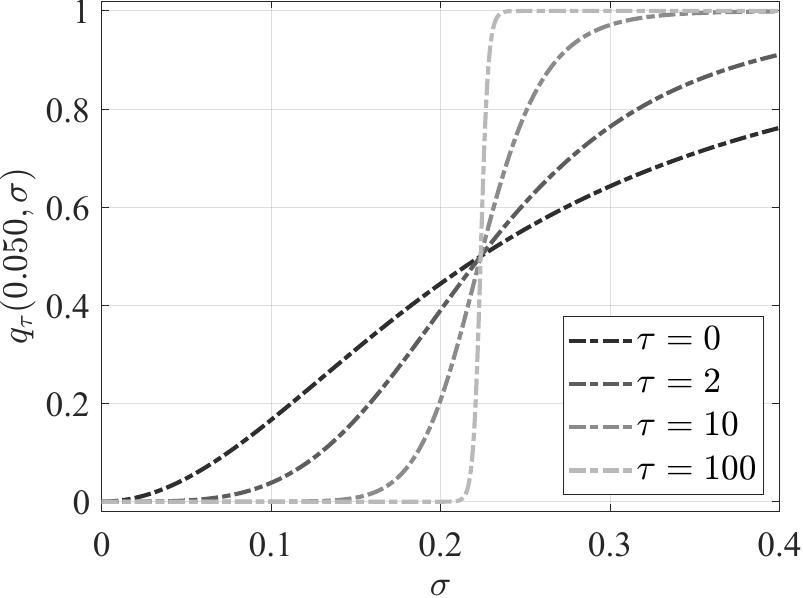}};

\end{tikzpicture}
    \caption{A plot of $q_\tau$ with $\alpha=0.005$ (left) and $\alpha = 0.05$ (right) and four different values of $\tau$.}\label{fig_filter_functions}
\end{centering}
\end{figure}

The above function $q_\tau$ corresponds to the minimization of the functional
\begin{equation}
\label{interpolating-q-var}
f\mapsto
\frac{1}{2}\normc{g-Tf}^2+\frac{1}{2}\normc{H_\alpha f}^2,
\end{equation}
in which $H_\alpha$ is the closed operator on~$F$ given by
$$
H_\alpha=V\left[\sqrt{\alpha}\left(\frac{\sqrt{\alpha}}{\sigma}\right)^{\tau/2}\right]V^*.
$$
The closedness of $H_{\alpha}$ guarantees the existence and uniqueness of solution of the minimization problem~\eqref{interpolating-q-var}.~\cite[Theorem 1 on p. 3]{morozov1984}

As a matter fact, using the expansion $T=W[\sigma]V^*$,
we then have (see~\cite[Section 25, p. 214]{morozov1984}):
\begin{eqnarray*}
f_\alpha
&=&
(T^*T+H_\alpha^*H_\alpha)^{-1}T^*g\\
&=&
\left(
V[\sigma^2]V^*+
V\left[\alpha\left(\frac{\sqrt{\alpha}}{\sigma}\right)^\tau\right]V^*
\right)^{-1}
V[\sigma]W^*g\\
&=&
V\left[\frac{1}{\sigma}
\frac{\sigma^2}
{\sigma^2+\alpha\left(\frac{\sqrt{\alpha}}{\sigma}\right)^\tau}
\right]W^*g\\
&=&
V\left[\frac{1}{\sigma}q_\tau(\alpha,\sigma)\right]W^*g,
\end{eqnarray*}
with $q_\tau$ as in~\eqref{interpolating-q}.
We stress here that the operator~$H_\alpha$ depends
both on~$\alpha$ and the function~$\sigma$. Its closedness
is established in Appendix~\ref{closedness-H}.
In a number of cases, including the deconvolution
problem considered in the previous subsection, $\sigma$ is explicitly 
known, so that the dependence of~$H_\alpha$ on~$\sigma$ is not an obstacle.
We also emphasize that, strictly speaking, the above regularization
does not pertain to the so-called {\sl generalized Tikhonov} class.
As a matter of fact, the dependence of~$H$ on~$\alpha$
is not that one would have if~$\alpha$ was merely a weight
in front of some $\alpha$-independent quadratic penalty
in the objective functional displayed in~\eqref{interpolating-q-var}.

%%%%%%%%%%%%%%%%%%%%%%%%%%%%
\section{Numerical examples}

We test the family of regularization schemes on a 1D
deconvolution problem. Recall that the operator
$T_\gamma\colon L^2(\Re) \to L^2(\Re)$ of
convolution by~$\gamma\in L^1(\Re)$ is given
by $T_\gamma = \gamma * f$. It can be diagonalized
by the Fourier-Plancherel operator.

Let $\vgf,\ggg\in\Re^{n}$ with $n\in \mathbb{N}$ be samples of functions $f,\gamma$ in $L^2(\Re)$
and $L^1(\Re)$, respectively. We assume that
the support of~$f$ and~$\gamma$ is contained in
the interval $[-1,1]$, and that the samples are taken at a uniform grid in this box. We can compute the discrete convolution to obtain $\vgg\in\Re^{n}$, given by
\begin{equation}
\label{discrete_convolution}
\begin{split}
    g_j &= \sum_{j'=1}^n f_{j-j'}\gamma_{j'}h,
 \end{split}
\end{equation}
where $\vgf$ and $\ggg$ are extended by zero to
all indices $j\notin \{1,\hdots,n\}$, and where $h$
is the spacing between points on the grid.

Note that the discrete convolutions
\eqref{discrete_convolution} correspond to the
Riemann sum approximations to the continuous
convolution.

Consider a noisy measurement $\vgg\in \Re^{n}$.
We compute reconstructions $\vgf_{\alpha,\tau}\in \Re^{n}$
by computing
\begin{equation}
\label{reconstruction_formula}
    \vgf_{\alpha,\tau} = U^{-1}\left(\frac{1}{U\ggg}
    q_{\tau}(\alpha,U\ggg)\right) U\vgg.
\end{equation}
Here $U: \Re^{n}\to \Re^{n}$ is a linear operator
that approximates the continuous Fourier transform.
The discrete Fourier transform operator can be used
to approximate the Fourier transform, as shown
in \cite{bailey}.

For the 2D example, we will consider the Shepp-Logan
phantom, a piece-wise constant grey-scale image with
values between $0$ and $1$. We convolve $f$ with a
kernel that is equal to the characteristic function
times a scalar multiple. In particular,
$\gamma=c\cdot\chi_A$, where $c>0$ is chosen such
that $\int_A \gamma(x)\ud x = 1$. In the numerical example,
$A = [-s_{\rm blur}, s_{\rm blur}]$, for some number
$s_{\rm blur}>0$. 

%======================
\subsection{1D example}

We consider three different functions
$f^i\,:\, \mathbb{T} \to [0,1]$, $i=1,2,3$,
where $\mathbb{T} = [-1,1]$ is the 1-dimensional torus,
which identifies the elements $-1$ and $1$.

The first function $f^1$ is continuous and piecewise smooth.
The function $f^2$ has one discontinuity. The function $f^3$
is $C^\infty$. All three functions are shown in the top left
of Figures~\ref{fig:1d_1}, \ref{fig:1d_2} and~\ref{fig:1d_3},
respectively.

The (exact) convolutions $g^i=\gamma*f^i$ are computed and
sampled on a uniform grid $x_j=-1+2j/N$, $j=0,1,\hdots, N-1$
to obtain the exact measurements $g^i_j = g^i(x_j)$.
Random Gaussian i.i.d. noise, $\varepsilon_j^\sigma$
with standard deviation $\sigma>0$ is added to obtain
a noisy measurement $\vgg^{i,\sigma}=[g^i_j+\varepsilon_j^\sigma]$.
In the implementation the "exact" measurement $g_j$ is in
fact computed using a discrete convolution on a much
finer grid than the grid $(x_j)$. As explained,
reconstructions are computed using
Equation~\eqref{reconstruction_formula},
where $U$ is an approximation to the Fourier transform
using the discrete Fourier transform.

The error in the reconstruction, relative to the true signal $f^i$, is computed by
\begin{equation*}
    \|\vgf_{\alpha, \tau}^i - \vgf^i\|_2/\|\vgf^i\|_2,
\end{equation*}
where $\vgf^i = [f^i(x_j)]$.

The values of $\alpha$ are chosen in two different ways:
\begin{itemize}
    \item
    The {\sl Morozov principle} \cite{morozov1984}:
    $\alpha$ is chosen as
    \begin{equation*}
        \alpha=\sup\{a'>0\,:\, 
        \|\ggg * \vgf_{\alpha,\tau} - \vgg^\sigma\|_2\leq
        1.1 \cdot \mathbb{E}(\|\varepsilon\|_2)\}.
    \end{equation*}
    where the expected value of the norm of the error
    is $\mathbb{E}(\|\varepsilon\|_2)=\sigma \sqrt{N}$. 
    \item
    Optimal choice: $\alpha$ is found by running through
    a large collection of values and choosing the value of $\alpha>0$
    that gives the lowest reconstruction error
    $\|\vgf_{\alpha, \tau}^i-\vgf^i\|_2/\|\vgf^i\|_2$.
\end{itemize}

The results of the experiment are shown in
Figure~\ref{fig:1d_1}, \ref{fig:1d_2}, \ref{fig:1d_3}
and Table~\ref{tab:1}, \ref{tab:2} and~\ref{tab:3}.

\begin{figure}[H]
    \centering
    \includegraphics[width = 0.9\textwidth]{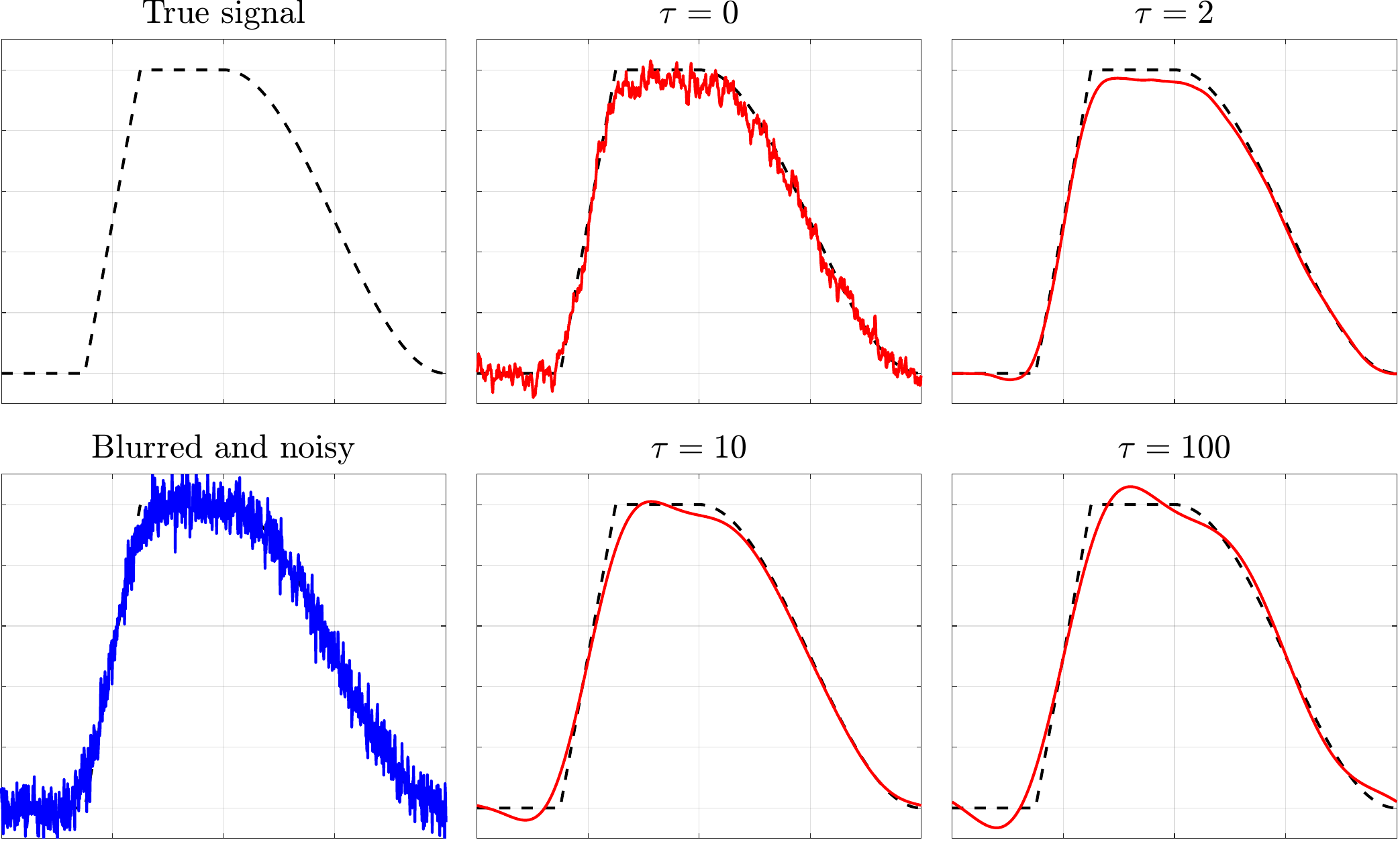}
    \caption{Reconstruction of $f^1$.
    Here we set $s_{\rm blur} = 0.1$, $N=1001$ and noise
    level $\sigma = 0.05$, and
    $\alpha$ is determined using the Morozov principle.}
    \label{fig:1d_1}
\end{figure}
%%%%%%%%%%%%%%%%%%%%%%%%%%%%%%%%%%%%%%%%%%%%%%%%%%%%%%%%%%%%%%%%%
\begin{figure}[H]
    \centering
    \includegraphics[width = 0.9\textwidth]{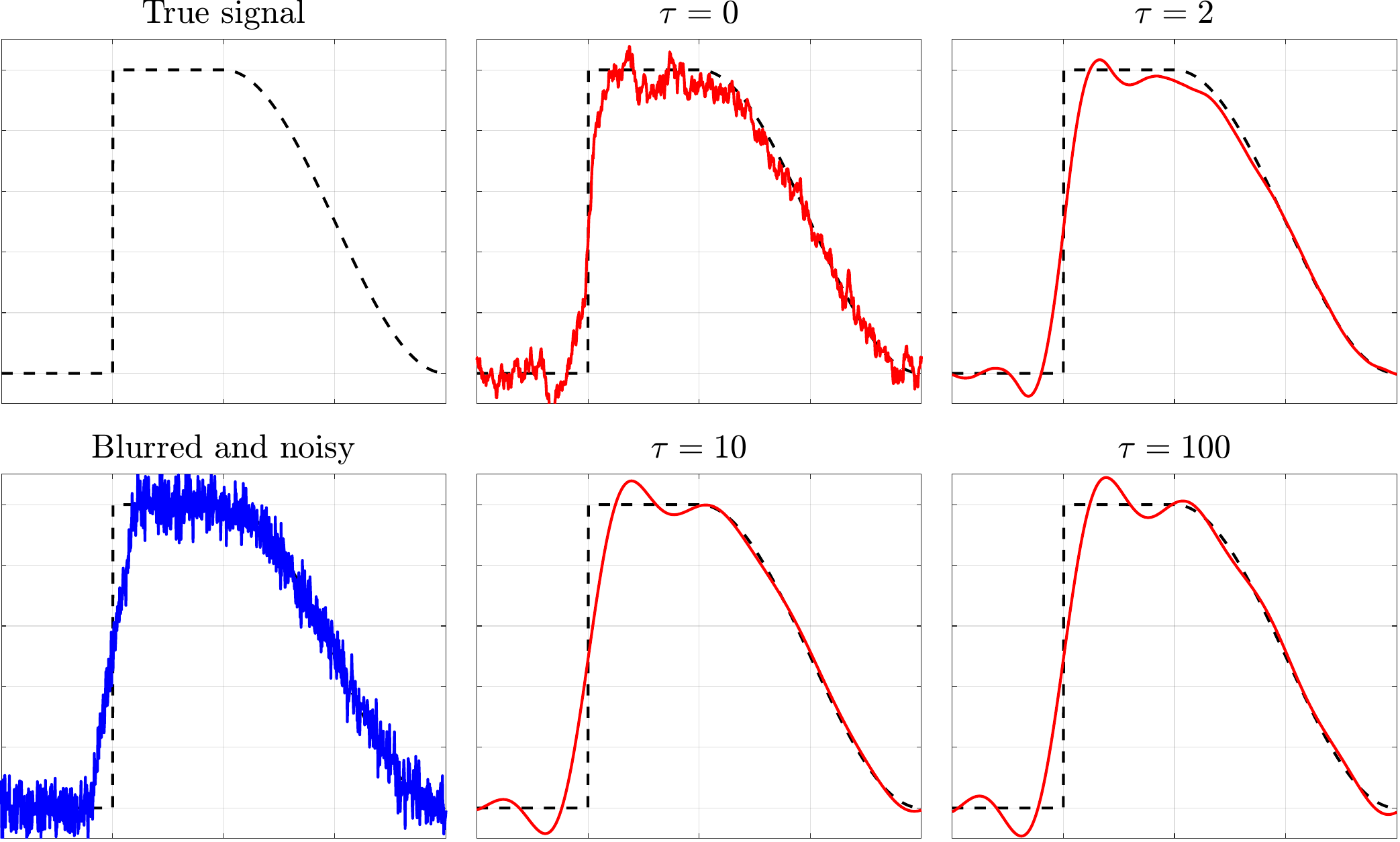}
    \caption{Reconstruction of $f^2$.
    Here we set $s_{\rm blur} = 0.1$, $N=1001$ and noise
    level $\sigma = 0.05$, and
    $\alpha$ is determined using the Morozov principle.}
    \label{fig:1d_2}
\end{figure}

%%%%%%%%%%%%%%%%%%%%%%%%%%%%%%%%%%%%%%%%%%%%%%%%%%%%%%%%%%%
\begin{figure}[H]
    \centering
    \includegraphics[width = 0.9\textwidth]{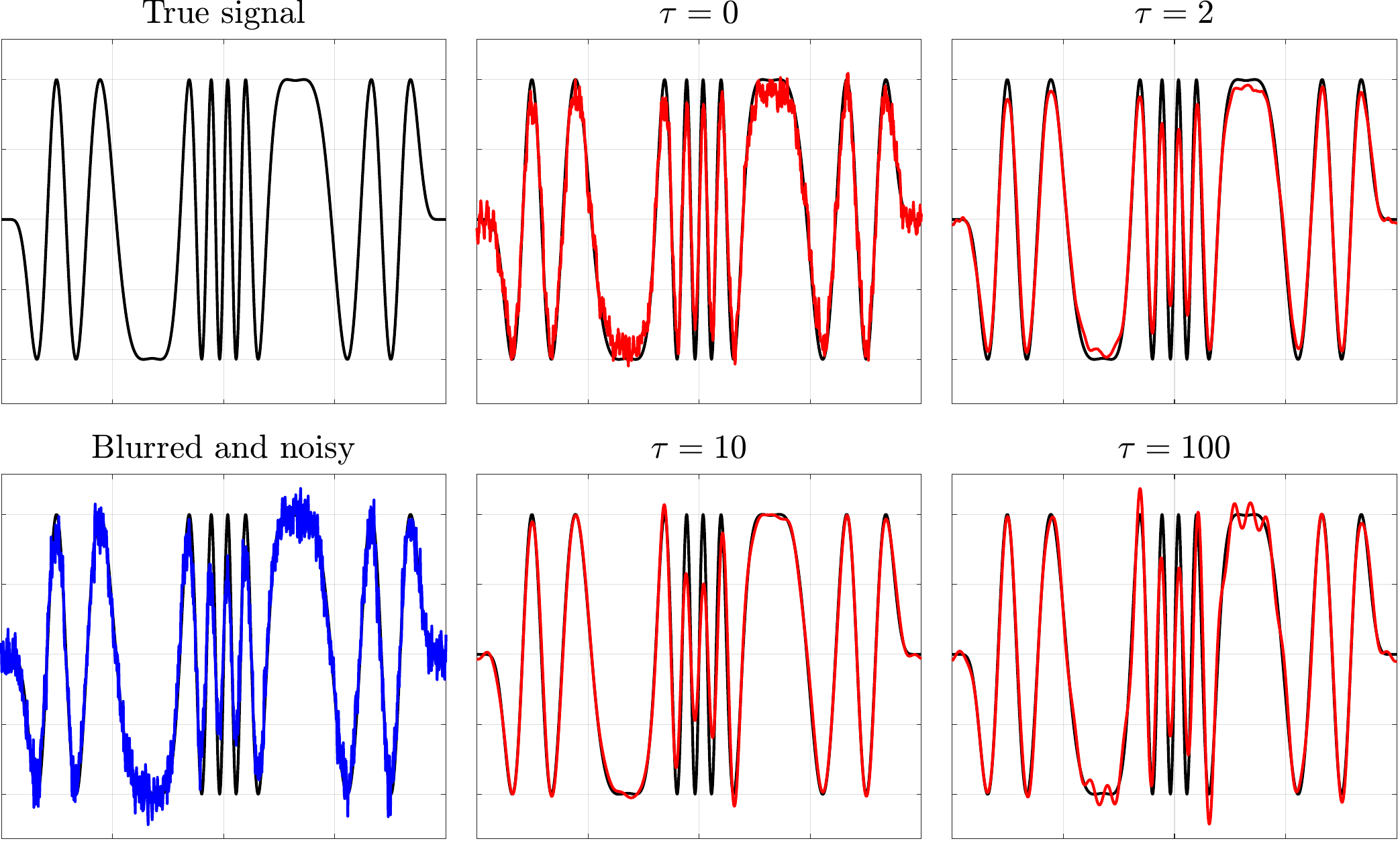}
    \caption{Reconstruction of $f^3$.
    Here we set $s_{\rm blur} = 0.03$, $N=1001$ and noise
    level $\sigma = 0.075$, and
    $\alpha$ is found using the Morozov principle.}
    \label{fig:1d_3}
\end{figure}

%%%%%%%%%%%%%%%%%%%%%%%%%%%%%%%%%%%%%%%%%%%%%%%%%%%%%%%%%%%%%%%%%%
\begin{table}[H]
\centering
\begin{tabular}{llllll}
                             &                              & $\tau = 0$ & $\tau = 2$ & $\tau = 10$ & $\tau = 100$ \\ \hline
\multicolumn{1}{l|}{Morozov} & \multicolumn{1}{l|}{Error}   &           $0.0615$   &   $0.0364$   &   $0.0471$   &   $0.0559$               \\
\multicolumn{1}{l|}{principle}  & \multicolumn{1}{l|}{$\alpha$} &            $0.0354$   &   $0.1727$   &   $0.5032$   &   $0.5620$            \\ \hline
\multicolumn{1}{l|}{Optimal} & \multicolumn{1}{l|}{Error}   &            $0.0607$   &   $0.0192$   &   $0.0193$   &   $0.0194$             \\
\multicolumn{1}{l|}{choice}  & \multicolumn{1}{l|}{$\alpha$} &           $0.0417$   &   $0.0753$   &   $0.1239$   &   $0.1286$            \\ \hline
\end{tabular}
\caption{Reconstruction of $f^1$. Here we set $s_{\rm blur} = 0.1$, $N=1001$ and noise level $\sigma = 0.05$.} \label{tab:1}
\end{table}

%%%%%%%%%%%%%%%%%%%%%%%%%%%%%%%%%%%%%%%%%%%%%%%%%%%%%%%%%%%%%%%%%%%%
\begin{table}[H] 
\centering
\begin{tabular}{llllll}
                             &                              & $\tau = 0$ & $\tau = 2$ & $\tau = 10$ & $\tau = 100$ \\ \hline
\multicolumn{1}{l|}{Morozov} & \multicolumn{1}{l|}{Error}   &           $0.1154$   &   $0.1288$   &   $0.1403$   &   $0.1413$               \\
\multicolumn{1}{l|}{principle}  & \multicolumn{1}{l|}{$\alpha$} &            $0.0328$   &   $0.1490$   &   $0.2841$   &   $0.2497$            \\ \hline
\multicolumn{1}{l|}{Optimal} & \multicolumn{1}{l|}{Error}   &            $0.1154$   &   $0.1037$   &   $0.1081$   &   $0.1098$             \\
\multicolumn{1}{l|}{choice}  & \multicolumn{1}{l|}{$\alpha$} &           $0.0317$   &   $0.0192$   &   $0.0236$   &   $0.0258$            \\ \hline
\end{tabular}
\caption{Reconstruction of $f^2$. Here we set $s_{\rm blur} = 0.1$, $N=1001$ and noise level $\sigma = 0.05$.}\label{tab:2}
\end{table}

%%%%%%%%%%%%%%%%%%%%%%%%%%%%%%%%%%%%%%%%%%%%%%%%%%%%%%%%%%%%%%%
\begin{table}[H] 
\centering
\begin{tabular}{llllll}
                             &                              & $\tau = 0$ & $\tau = 2$ & $\tau = 10$ & $\tau = 100$ \\ \hline
\multicolumn{1}{l|}{Morozov} & \multicolumn{1}{l|}{Error}   &           $0.1585$   &   $0.1355$   &   $0.1621$   &   $0.1821$               \\
\multicolumn{1}{l|}{principle}  & \multicolumn{1}{l|}{$\alpha$} &           $0.0873$   &   $0.2277$   &   $0.3354$   &   $0.3354$             \\ \hline
\multicolumn{1}{l|}{Optimal} & \multicolumn{1}{l|}{Error}   &            $0.1577$   &   $0.0861$   &   $0.0779$   &   $0.0777$             \\
\multicolumn{1}{l|}{choice}  & \multicolumn{1}{l|}{$\alpha$} &           $0.0811$   &   $0.0993$   &   $0.1012$   &   $0.0889$             \\ \hline
\end{tabular}
\caption{Reconstruction of $f^3$. Here we set $s_{\rm blur} = 0.03$, $N=1001$ and noise level $\sigma = 0.075$.}\label{tab:3}
\end{table}

We clearly see that Tikhonov regularization ($\tau = 0$),
does not remove all the oscillations of the noise.
On the other hand, with $\tau=100$, meaning almost
spectral cutoff, we see large oscillations close to
discontinuities or regions with rapid changes in the
function value. Depending on the method for
choosing $\alpha$, choosing an intermediate value
of $\tau$ (for example $\tau=2$ or $\tau=10$) somewhat
removes these two drawbacks of Tikhonov regularization
and spectral cutoff.

Regardless of the value of~$\tau$, the regularization
methods fails to reconstruct the function accurately
at discontinuities or in regions with rapid changes
in the function value.

%======================
\subsection{2D example}

We consider the multi--frequency inverse source problem in two dimensions.
Let $D_0,D\subset\mathbb{R}^2$ be concentric disks with radii $R_0,R$ satisfying $R_0<R$, and let $D_0$ include the support
of an unknown acoustic or electromagnetic source function $s\in L^2(D_0)$. For each wavenumber ('frequency') $k>0$,
the field $u_k$ radiated by $s$ satisfies the Helmholtz equation
\begin{equation}
(\Delta + k^2)u_k(x)=s(x), \qquad x\in\mathbb{R}^2,
\end{equation}
together with the Sommerfeld radiation condition
\begin{equation}
\lim_{|x|\to\infty}\sqrt{|x|}
\big(\partial_{|x|}-ik\big)u_k(x)=0,
\quad \text{uniformly for } x/|x|\in S^1 .
\end{equation}

The measured data are given by the restriction of $u_k$ to the boundary
$\partial D$. The corresponding forward operator $F_k:s\mapsto U_k:=u_k|_{\partial D}$, compact from $L^2(D_0)$ to $L^2(\partial D)$, is given by the radiation integral
\begin{equation}\label{eqn:fo}
U_k(x)=(F_ks)(x)=\int_{y\in D_0}\Phi_k(x-y)s(y)dy,\quad x\in\partial D,
\end{equation}
where $\Phi_k(x)=\frac{i}{4}H_0^{(1)}(k|x-y|)$ is the outgoing fundamental solution of the
Helmholtz equation in the plane, and $H_0^{(1)}$ is the Hankel function of order zero and of first kind. A singular value expansion of $F_k$ was given and characterized in~\cite{bandwidth}; see also~\cite{Bao-2010}. For a set of frequencies $Q=\{k_j\}_{j\in I}$ and corresponding boundary
measurements $\{U_{k_j}\}$, the multi--frequency inverse source problem is to
reconstruct $s\in L^2(D_0)$ such that
\begin{equation}
U_{k_j}=F_{k_j}s,\qquad \forall k_j\in Q.
\end{equation}
The inverse source problem is ill-posed since $\ker F_k=(\Delta+k^2)H^2(D_0)$.~\cite{ziol} The degree of ill--posedness of the inverse problem depends on the
choice of the frequency set $Q$~\cite{Karamehmedovic_2018}, with broader frequency coverage leading to a larger subspace of source functions that can be
reconstructed in a stable manner.

%, with piecewise constant basis functions for the source $s$.
For the numerical experiment, we discretize the domain $D$ using a triangular mesh, generate a synthetic ground truth source $s$, and compute boundary data using the forward operator~\eqref{eqn:fo}. For this experiment we select the frequencies $k_j$
according to
\begin{equation}
k_jR_0 = j\pi, \qquad j=2,\dots,30.
\end{equation}
% This choice is arbitrary, and we should nok make it sound like this is specificly well suit. What is R0? D0 is not defined as a disc here, but maybe it should be.
We assemble the individual-frequency discretized forward operators into a single joint system matrix and compute its singular value
decomposition. Finally, we reconstruct the source using the spectral filtering method defined in \eqref{reconstruction_formula}. Our code is available at \url{https://github.com/msaca-okse/tikhonov_ISP}. 

Figure~\ref{fig:111} shows the ground truth (top left) and reconstructions corresponding to different values of $\tau$.

Figure~\ref{fig:222} shows the singular value spectrum of the combined forward operator (all used frequencies) together with the projections of the ground truth onto the corresponding right singular vectors. We clearly see that neither the Tikhonov regularization ($\tau=0$) nor the truncated SVD regularization ($\tau$ large) give the best solution of the inverse source problem, in that the first includes speckle-like artifacts and the latter exhibits the Gibbs phenomenon near the piecewise constant component of the source. In fact, the choice $\tau=3$ seems to be better than both of the extreme options. 

\begin{figure}
    \centering
    \includegraphics[width=0.75\linewidth]{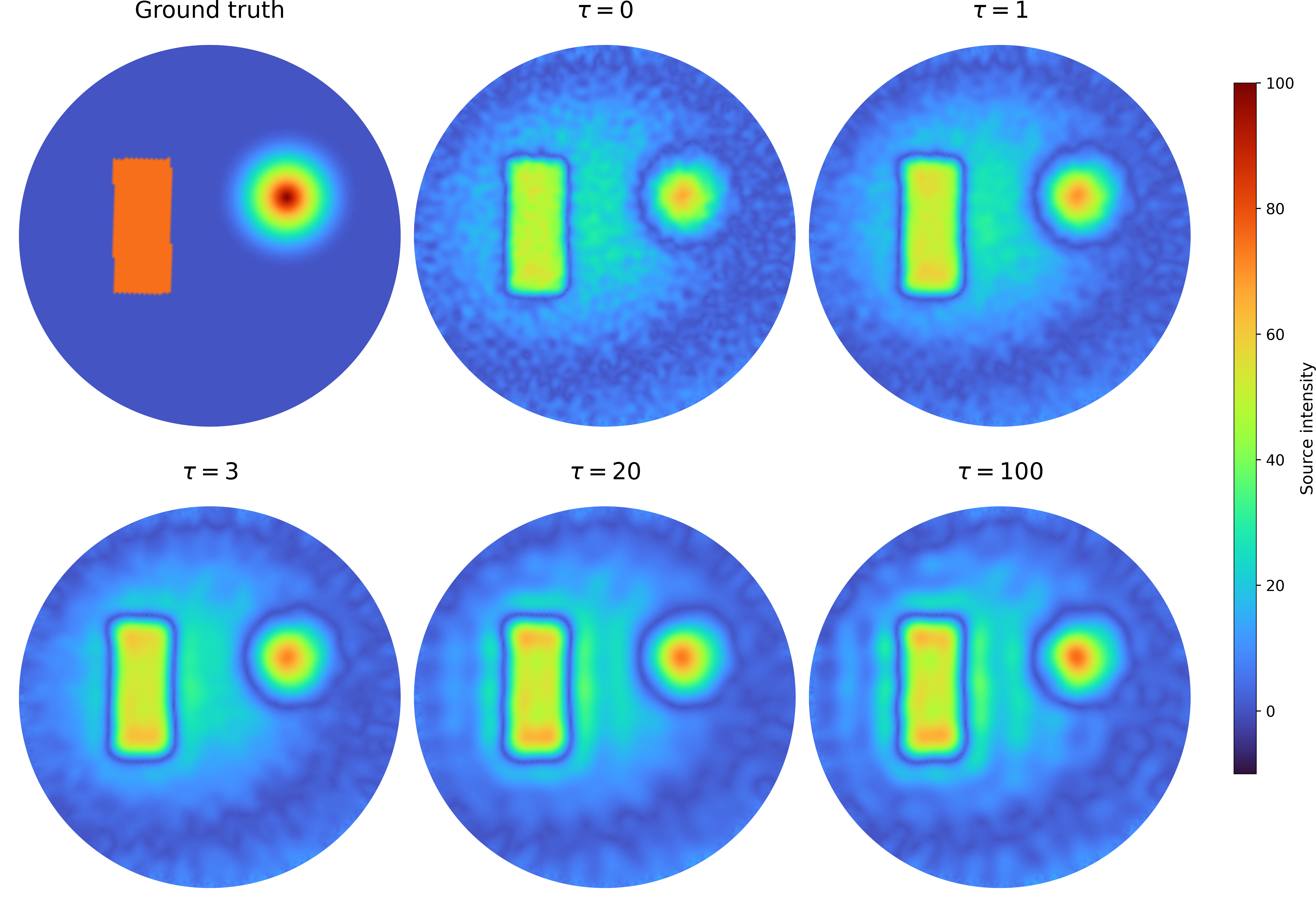}
    \caption{The ground truth (top left) and reconstructions for different values of $\tau$. We set $D$ to be the unit disc, and $D_0$ to be the disc centered at the origin with radius $0.99$. }
    \label{fig:111}
\end{figure}

\begin{figure}
    \centering
\includegraphics[width=0.75\linewidth]{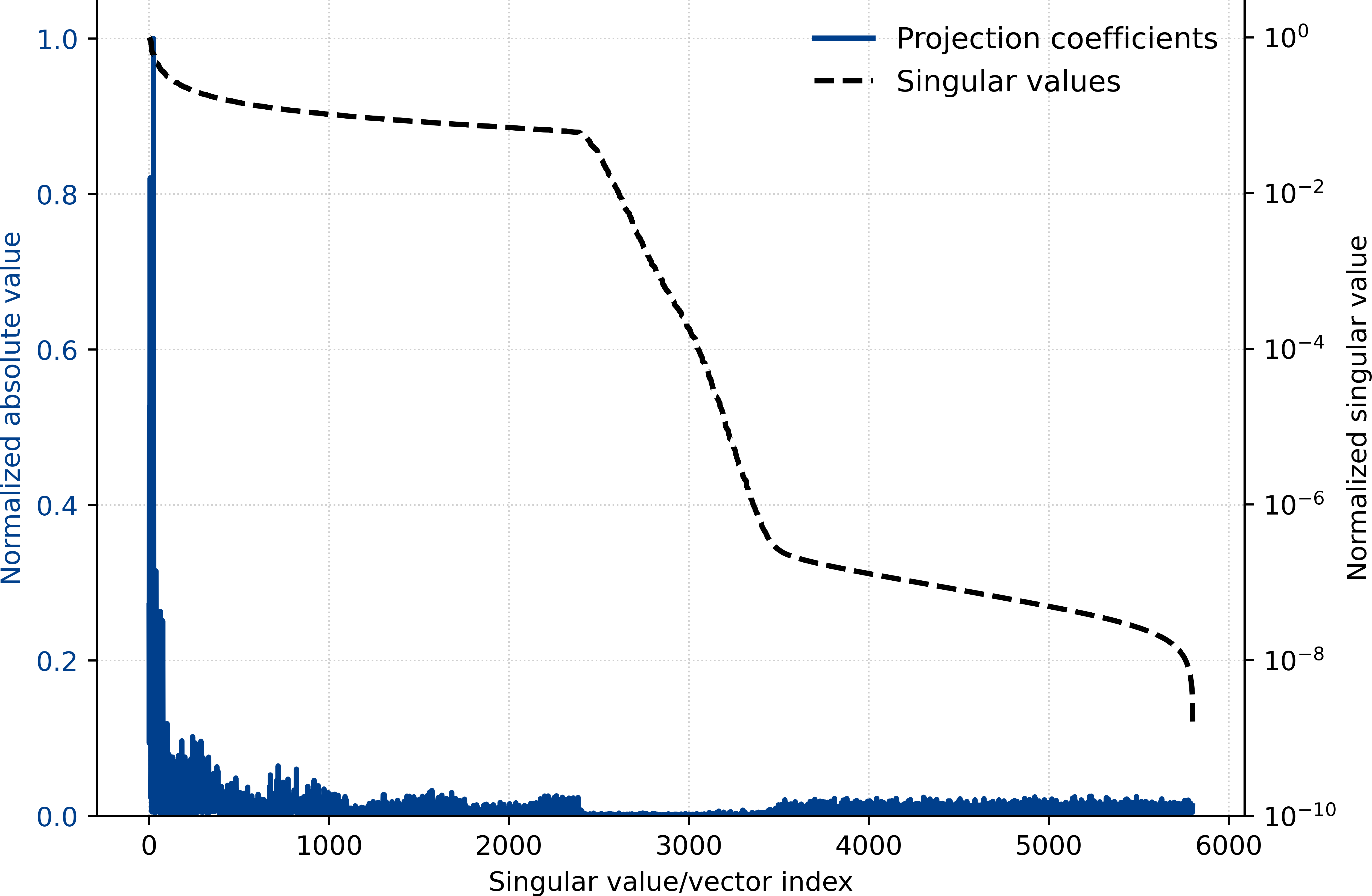}
\caption{The singular value spectrum of the combined forward operator, with projections of the ground truth onto the corresponding right singular vectors.}   \label{fig:222}
\end{figure}

%%%%%%%%%%%%%%%%%%%%%%%%%%%%%%%%%%%%%%%%%%%%%%%%%%%%%%%%%%%%%%%%%%%%

\appendix

%%%%%%%%%%%%%%%%%%%%%%%%%%%%%%%%%%%%%%%%%%%%%%%
\section{Appendix: Closedness of the operator~$H_\alpha$}
\label{closedness-H}

We may consider the operator $H=H_1$, since the positive
constant $(\sqrt{\alpha})^{1+\tau/2}$ doesn't play any role:
$$
H=V\left[\sigma^{-\tau/2}\right]V^*.
$$
Recall that $V\colon L^2(\M,\B,\mu)\to F$ is unitary.
If $\phi\in\ran H$, then
$H^{-1}\phi=V\left[\sigma^{\tau/2}\right]V^*\phi$.
It is readily seen that the domain of~$H$ is given by
$$
\D(H)=\setc{f\in F}{\sigma^{-\tau/2}V^*f\in L^2(\M,\B,\mu)}.
$$
Now assume that $f_n\in\D(H)$, that $f_n\to f$,
and $Hf_n\to\varphi$ in~$F$. We have:
\begin{eqnarray*}
\varphi=\lim Hf_n\;\hbox{in}\;F
&\Leftrightarrow&
V^*\varphi=\lim V^*Hf_n\;\hbox{in}\;L^2(\M,\B,\mu)\\
&\Leftrightarrow&
\big[\sigma(x)^{\tau/2}\big]V^*\varphi=
\lim \big[\sigma(x)^{\tau/2}\big]V^*Hf_n\;\hbox{in}\;L^2(\M,\B,\mu)\\
&\Leftrightarrow&
V\big[\sigma(x)^{\tau/2}\big]V^*\varphi=
\lim V\big[\sigma(x)^{\tau/2}\big]V^*Hf_n\;\hbox{in}\;F,
\end{eqnarray*}
in which the first and third equivalence stem
from the unitarity of~$V$,
while the second equivalence follows from the boundedness
of the function $\sigma(x)^{\tau/2}$.
Therefore, since $Hf_n\in\ran H$ and
$V\big[\sigma(x)^{\tau/2}\big]V^*=H^{-1}$, we obtain that
$f=H^{-1}\varphi$, which implies in turn that $f\in\D(H)$
and $Hf=\varphi$.
As a matter of fact,
\begin{eqnarray*}
\int\sigma(x)^{-\tau}\modc{(V^*f)(x)}^2\ud\mu(x)
&=&
\int\sigma(x)^{-\tau}\modc{V^*H^{-1}\varphi(x)}^2\ud\mu(x)\\
&=&
\int\sigma(x)^{-\tau}\sigma(x)^{\tau}
\modc{V^*\varphi(x)}^2\ud\mu(x)\\
&=&
\int\modc{V^*\varphi(x)}^2\ud\mu(x),
\end{eqnarray*}
the last integral being finite since
$V^*\varphi\in L^2(\M,\B,\mu)$.
This proves the closedness of~$H$.

\bibliographystyle{amsplain}
\bibliography{references}

\end{document}